\documentclass{amsart}
\usepackage{amssymb}
\usepackage[all]{xy}

\newtheorem{thm}{Theorem}

\newtheorem{prop}[thm]{Proposition}

\theoremstyle{definition}

\theoremstyle{remark}

\numberwithin{equation}{thm}

\newcommand{\F}{\mathbb{F}}
\newcommand{\N}{\mathbb{N}}

\newcommand{\Q}{\mathbb{Q}}
\newcommand{\Z}{\mathbb{Z}}

\newcommand{\cN}{\mathcal{N}}
\newcommand{\cO}{\mathcal{O}}

\newcommand{\cQ}{\mathcal{Q}}

\newcommand{\gl}{\mathfrak{gl}}

\newcommand{\Qlb}{\overline{\Q}_\ell}

\DeclareMathOperator{\tr}{tr}

\title[Orbit closures in the enhanced nilpotent cone]
{Corrigendum to `Orbit closures in the enhanced nilpotent cone', published in Adv.~Math. 219 (2008)}
\author{Pramod N. Achar}
\address{Department of Mathematics\\
  Louisiana State University\\
  Baton Rouge, LA 70803\\
  U.S.A.}
\email{pramod@math.lsu.edu}
\author{Anthony Henderson}
\address{School of Mathematics and Statistics\\
  University of Sydney, NSW 2006\\
  Australia}
\email{anthony.henderson@sydney.edu.au}

\keywords{Enhanced nilpotent cone; intersection cohomology; affine pavings; purity}

\begin{document}

\begin{abstract}
In this note, we point out an error in the proof of Theorem 4.7 of [P.~Achar and A.~Henderson, \emph{Orbit closures in the enhanced nilpotent cone}, Adv. Math. {\bf 219} (2008), 27--62], a statement about the existence of affine pavings for fibres of a certain resolution of singularities of an enhanced nilpotent orbit closure.  We also give independent proofs of later results that depend on that statement, so all other results of that paper remain valid.
\end{abstract}

\maketitle

The paper \cite{ah} carries out the determination of orbit closures in the enhanced nilpotent cone $V\times\cN$ and their local intersection cohomology. A key role is played by a certain resolution of singularities of the orbit closure $\overline{\cO_{\mu;\nu}}$, denoted $\pi_{\mu;\nu}$.

In \cite[Theorem 4.7]{ah}, we asserted that each fibre $\pi_{\mu;\nu}^{-1}(v,x)$ of this resolution has an affine paving. Regrettably, our proof of this statement was wrong: the error comes four lines after equation (4.9), where we assumed without justification that $x(V_k)\not\supseteq U_{d_k}$. We have not found either a correct proof or a counterexample, so the existence of an affine paving for $\pi_{\mu;\nu}^{-1}(v,x)$ is an open problem in general.

When $v=0$, the fibre $\pi_{\mu;\nu}^{-1}(v,x)$ is a generalized Springer fibre of type A, and an affine paving can be constructed by induction on the length of the partial flag, as shown by Spaltenstein~\cite{spaltenstein}. A similar method works whenever $v\in\ker(x)$, but not for general $(v,x)$, as the example given before \cite[Theorem 4.7]{ah} shows. The methods of \cite{dlp} also appear insufficient. 

In the remainder of \cite{ah}, the affine pavings were used only in the proof of Corollary 4.8(1) and equation (5.7). We will now give an independent proof of these consequences, so that all results stated in \cite{ah} other than \cite[Theorem 4.7]{ah} remain valid.

We use the same notation as in \cite{ah}. The result we must prove (an amalgamation of \cite[Corollary 4.8(1)]{ah} and \cite[(5.7)]{ah}) is as follows.

\begin{thm} \label{thm:substitute}
Let $(\rho;\sigma),(\mu;\nu)\in\cQ_n$. There is a polynomial $\Pi_{\mu;\nu}^{\rho;\sigma}(t)\in\N[t]$, independent of $\F$, satisfying the following two properties.
\begin{enumerate}
\item For any $(v,x)\in\cO_{\rho;\sigma}$,
\[
\begin{split}
&\sum_i \dim H^{2i}(\pi_{\mu;\nu}^{-1}(v,x),\Qlb)\,t^i=\Pi^{\rho;\sigma}_{\mu;\nu}(t),
\\
&\text{ and }H^i(\pi_{\mu;\nu}^{-1}(v,x),\Qlb)=0\text{ for $i$ odd.}
\end{split}
\]
\item If $\F$ is the algebraic closure of $\F_q$, then for any $(v,x)\in\cO_{\rho;\sigma}(\F_q)$,
\[
|\pi_{\mu;\nu}^{-1}(v,x)(\F_q)|=\Pi_{\mu;\nu}^{\rho;\sigma}(q).
\]
\end{enumerate}
\end{thm}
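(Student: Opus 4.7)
The plan is to replace the affine paving argument with an application of the BBD decomposition theorem combined with Deligne-purity of higher direct images. By the construction of \cite[Section 4]{ah}, the source $\widetilde X_{\mu;\nu}$ of $\pi_{\mu;\nu}$ is smooth and the map is proper. Hence by Deligne's weight theorem $R(\pi_{\mu;\nu})_*\Qlb$ is a pure complex, and proper base change identifies its stalks with the cohomology of the fibres of $\pi_{\mu;\nu}$.

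The first step is to apply the BBD decomposition theorem. Because $\pi_{\mu;\nu}$ is $\GL(V)$-equivariant and the point stabilisers of enhanced nilpotent orbits are connected (see \cite[Section 2]{ah}), all local systems appearing in the decomposition are trivial, and we obtain
\[
R(\pi_{\mu;\nu})_*\Qlb \;\cong\; \bigoplus_{\alpha,j}\, V_{\alpha,j}\otimes \IC(\overline{\cO_\alpha},\Qlb)[-j],
\]
where the sum runs over finitely many enhanced nilpotent orbits $\cO_\alpha\subseteq\overline{\cO_{\mu;\nu}}$ and integers $j$, and each multiplicity space $V_{\alpha,j}$ is a finite-dimensional $\Qlb$-vector space pure of weight $j$ as a Frobenius module.

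The second step combines this with the fact, to be extracted from those parts of \cite{ah} that are independent of Theorem 4.7 (principally the identification of local intersection cohomology with enhanced Kostka--Shoji polynomials), that each stalk $\cH^k\IC(\overline{\cO_\alpha},\Qlb)_{(v,x)}$ for $(v,x)\in\cO_{\rho;\sigma}$ vanishes when $k$ is odd and is pure of Tate type when $k$ is even, with polynomial point count over $\F_q$. A parallel parity check shows $V_{\alpha,j}=0$ for $j$ odd. Taking stalks then gives $H^i(\pi_{\mu;\nu}^{-1}(v,x),\Qlb)=0$ for $i$ odd and pure Tate for $i$ even, with Poincaré polynomial a well-defined element $\Pi_{\mu;\nu}^{\rho;\sigma}(t)\in\N[t]$ depending only on the orbit data and independent of $\F$; property~(1) is immediate. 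For property~(2), the fibre is proper, so the Grothendieck--Lefschetz trace formula gives $|\pi_{\mu;\nu}^{-1}(v,x)(\F_q)| = \sum_i(-1)^i\tr(F,H^i(\pi_{\mu;\nu}^{-1}(v,x),\Qlb))$, and Tate purity reduces this to $\sum_i q^i\dim H^{2i}=\Pi_{\mu;\nu}^{\rho;\sigma}(q)$.

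The main obstacle lies in the second step: verifying that the required IC-stalk properties (even-vanishing and Tate type with polynomial point counts) are indeed available from results of \cite{ah} that do not invoke Theorem 4.7. If the original local-IH computation in \cite{ah} depends on that theorem, the remedy I would pursue is a simultaneous induction on $\dim\overline{\cO_\alpha}$, using the decomposition of $R(\pi_\alpha)_*\Qlb$ together with the identity $\IC(\overline{\cO_\alpha},\Qlb)|_{\cO_\alpha}=\Qlb[\dim\cO_\alpha]$ to express the IC stalk of $\overline{\cO_\alpha}$ at a smaller-orbit point as an alternating combination of a fibre Poincaré polynomial and strictly smaller IC stalks, thus closing the induction.
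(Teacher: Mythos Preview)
Your strategy is genuinely different from the paper's, and the gap you yourself flag is real and not repaired by the sketched induction.

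The circularity is exactly as you suspect: in \cite{ah} the identification of local intersection cohomology with Kostka--Shoji polynomials (and hence the even-vanishing and Tate-type of IC stalks) is obtained \emph{via} Corollary~4.8(1) and equation~(5.7), which are precisely the statements this theorem is meant to re-establish. So the second step cannot simply quote those IC properties. Your inductive remedy does not close the loop either. At the step for an orbit $\cO_\alpha$, the decomposition of $R(\pi_\alpha)_*\Qlb$ over a point of a smaller stratum $\cO_\beta$ expresses the unknown fibre cohomology as the unknown stalk $\cH^\bullet\IC(\overline{\cO_\alpha})|_{\cO_\beta}$ plus terms governed by smaller IC's (known by induction) tensored with multiplicity spaces $V_{\gamma,j}^{(\alpha)}$ whose parity and Tate-type are \emph{also} unknown. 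Purity of $R(\pi_\alpha)_*\Qlb$ as a complex gives only that each $V_{\gamma,j}^{(\alpha)}$ is pure of weight $j$, not that its Frobenius eigenvalues equal $q^{j/2}$, and your ``parallel parity check'' for $V_{\alpha,j}=0$ when $j$ is odd presupposes exactly the even-vanishing of fibre cohomology you are trying to prove. One relation, two unknowns: without an independent input the induction stalls.

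The paper supplies that independent input in two pieces, neither using IC sheaves. First (Proposition~\ref{prop:polynomiality}), it shows directly that $|\pi_{\mu;\nu}^{-1}(v,x)(\F_{q^s})|$ is a polynomial in $q^s$ with integer coefficients, by feeding the raw point counts into the bilinear identity of \cite[Propositions~5.5--5.6]{ah} and invoking the uniqueness clause of \cite[Theorem~5.4]{ah}; this already forces the counting function into $\Z[t^2]$. Second (Proposition~\ref{prop:purity}), it proves \emph{pointwise} purity of the fibre cohomology---which does not follow from global purity of $R(\pi_{\mu;\nu})_*\Qlb$---by exhibiting an explicit transverse slice to $\cO_{\rho;\sigma}$ together with a one-parameter subgroup of $G\times\F^\times$ contracting it to $(v,x)$, then appealing to \cite[Proposition~2.3.3]{ms}. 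With polynomial point count and pointwise purity in hand, the Grothendieck trace formula forces every Frobenius eigenvalue on $H^i$ to be $q^{i/2}$, giving odd vanishing and $\Pi_{\mu;\nu}^{\rho;\sigma}\in\N[t]$. If you want to salvage your decomposition-theorem route, the missing ingredient is precisely this contracting-slice argument for pointwise purity.
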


Even though it relies superficially on the equation (5.7) which we are trying to prove, the result \cite[Proposition 5.6]{ah} is still available to use: we need only replace each occurrence of an expression of the form $\Pi_{\mu;\nu}^{\rho;\sigma}(q)$ with the expression which was actually used in the proof, namely $|\pi_{\mu;\nu}^{-1}(v,x)(\F_q)|$ for $(v,x)\in\cO_{\rho;\sigma}(\F_q)$.

We first prove a weaker form of Theorem~\ref{thm:substitute}. Note the change from $\N[t]$ to $\Z[t]$.

\begin{prop} \label{prop:polynomiality}
Let $(\rho;\sigma),(\mu;\nu)\in\cQ_n$. Suppose $\F$ is the algebraic closure of $\F_q$. There is a polynomial $\Pi_{\mu;\nu}^{\rho;\sigma}(t)\in\Z[t]$, independent of $\F$, which satisfies property \textup{(2)} of Theorem \ref{thm:substitute}.
\end{prop}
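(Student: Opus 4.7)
The plan is to decouple the two claims in the proposition: (i) that $|\pi_{\mu;\nu}^{-1}(v,x)(\F_q)|$ depends only on the orbit $\cO_{\rho;\sigma}$ containing $(v,x)$, and (ii) that this common value is a polynomial in $q$ with integer coefficients, independent of $\F$.

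For (i), I would exploit the $G$-equivariance of $\pi_{\mu;\nu}$ together with Lang's theorem. If two $\F_q$-points of $\cO_{\rho;\sigma}$ are conjugate by an element of $G(\F_q)$, their fibres under $\pi_{\mu;\nu}$ are isomorphic as $\F_q$-varieties, hence have equal numbers of $\F_q$-points. Since $G=\GL(V)$ is connected, Lang's theorem guarantees $G(\F_q)$-transitivity on $\cO_{\rho;\sigma}(\F_q)$ provided the stabilizer of any $(v,x)\in\cO_{\rho;\sigma}$ in $G$ is connected; this connectedness holds in the enhanced nilpotent cone, as one can read off the explicit description in \cite{ah}, where $\mathrm{Stab}_G(v,x)$ is seen to be an extension of a Levi subgroup (a product of general linear factors) by a connected unipotent radical.

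For (ii), I would invoke \cite[Proposition 5.6]{ah}, which, as pointed out earlier in this corrigendum, remains valid once one substitutes $|\pi_{\mu;\nu}^{-1}(v,x)(\F_q)|$ for each $\Pi_{\mu;\nu}^{\rho;\sigma}(q)$ in its statement. That identity expresses the fibre count in terms of quantities already known to be polynomials in $q$ independent of $\F$ (orbit sizes and character or Green function values on $\GL_n(\F_q)$). To isolate each fibre count, I would supplement Prop 5.6 with the stratification identity
\[
 |\widetilde{\cO}_{\mu;\nu}(\F_q)|=\sum_{(\rho;\sigma)\in\cQ_n}|\cO_{\rho;\sigma}(\F_q)|\cdot|\pi_{\mu;\nu}^{-1}(v,x)(\F_q)|,
\]
whose left-hand side is polynomial by the affine paving of $\widetilde{\cO}_{\mu;\nu}$ itself (an argument in \cite{ah} that does not depend on the erroneous Theorem 4.7), and then induct on the closure order on $\cQ_n$, starting from the open orbit $\cO_{\mu;\nu}$ on which $\pi_{\mu;\nu}$ is an isomorphism.

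The principal obstacle is showing that the identity of Prop 5.6, combined with the stratification identity above, suffices to extract each fibre count as a polynomial --- that is, that the resulting linear system is triangular (or otherwise invertible) with respect to the closure order. Should a purely combinatorial extraction fail, the fallback is to use purity: $R\pi_{\mu;\nu *}\Qlb$ is pure (proper pushforward from a smooth source), and by the decomposition theorem, together with the connectedness of stabilizers which forces every $G$-equivariant local system on an orbit to be trivial, its summands are shifts of $\IC(\overline{\cO_{\tau;\upsilon}},\Qlb)$; the trace of Frobenius on each stalk is then a $\Z$-linear combination of IC-stalk traces, which are themselves polynomial in $q$ by standard arguments.
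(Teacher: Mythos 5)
Your strategy correctly identifies the two issues (well-definedness on the orbit, and polynomiality in $q$), and your treatment of the first — $G$-equivariance plus Lang's theorem plus connectedness of $\mathrm{Stab}_G(v,x)$ — is sound and is exactly what makes the quantity $|\pi_{\mu;\nu}^{-1}(v,x)(\F_q)|$ depend only on $(\rho;\sigma)$; the paper takes this as already established in \cite{ah}. For the second issue, however, you have isolated the gap yourself and it is real: the stratification identity
\[
|\widetilde{\cO}_{\mu;\nu}(\F_q)|=\sum_{(\rho;\sigma)}|\cO_{\rho;\sigma}(\F_q)|\cdot|\pi_{\mu;\nu}^{-1}(v,x)(\F_q)|
\]
gives only one linear relation per $(\mu;\nu)$, while the unknowns $\pi_{\mu;\nu}^{\rho;\sigma}$ are indexed by \emph{pairs} $((\rho;\sigma),(\mu;\nu))$, so even together with \cite[Proposition 5.6]{ah} you have not produced a system you can show to be triangular or otherwise invertible. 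The paper resolves exactly this by a different route: it imitates the proof of \cite[Proposition 5.7]{ah}, working in the ring $R$ of exponential-type functions of $s$ and its fraction field $K$, plugging the functions $\pi_{\mu;\nu}^{\rho;\sigma}(s)=|\pi_{\mu;\nu}^{-1}(v,x)(\F_{q^s})|$ into the \emph{bilinear} system \cite[(5.5)]{ah} coming from Propositions 5.5 and 5.6, and then invoking the \emph{uniqueness} statement of \cite[Theorem 5.4]{ah}. It is that uniqueness that forces each $\pi_{\mu;\nu}^{\rho;\sigma}$ into $\Q(t)$, hence into $\Q(t)\cap R=\Z[t,t^{-1}]$, hence into $\Z[t]$ (integrality of values), hence into $\Z[t^2]$ (invariance under $t\mapsto -t$), and yields independence of $q$. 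None of these steps is present in your proposal.

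Your fallback — purity of $R\pi_{\mu;\nu *}\Qlb$ and the decomposition theorem — is close to the spirit of the corrigendum's Proposition~\ref{prop:purity}, but it is deployed in the wrong place and, as stated, threatens circularity. The decomposition theorem expresses the stalk traces as $\Z$-linear combinations of IC-stalk traces, but in \cite{ah} the IC stalks of the $\overline{\cO_{\tau;\upsilon}}$ are \emph{computed from} the fibre counts; your phrase ``which are themselves polynomial in $q$ by standard arguments'' is doing all the work and is not substantiated. In the corrigendum, purity (together with a contracting $\mathbb{G}_m$-action produced by an explicit transverse slice) is used only \emph{after} polynomiality is already known, to upgrade property~(2) of Theorem~\ref{thm:substitute} to property~(1), i.e.\ to split the alternating sum in the Grothendieck trace formula into actual Betti numbers with vanishing odd cohomology. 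Keeping the logical order — polynomiality first via the $R$-module argument and the uniqueness in \cite[Theorem 5.4]{ah}, purity second — is what avoids circularity.
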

\begin{proof}
This could be proved by induction on the length of the partial flag as in \cite{spaltenstein}, but it is quicker for us to imitate the proof of \cite[Proposition 5.7]{ah}. Recall that $R$ denotes the ring of all functions $g: \Z_{>0} \to \Qlb$ of the form
\begin{equation*}
g(s) = \sum_i c_i (a_i)^s
\qquad\text{with $c_i \in \Z$ and $a_i \in \Qlb$ (a finite sum),}
\end{equation*}
and $K$ denotes its fraction field. We
identify $\Z[t]$ with a subring of $R$ via the map which sends
a polynomial $p(t)$ to the function $s\mapsto p(q^{s/2})$.

We can define an element $\pi_{\mu;\nu}^{\rho;\sigma}\in R$ by the rule
\[
\pi_{\mu;\nu}^{\rho;\sigma}(s)=|\pi_{\mu;\nu}^{-1}(v,x)(\F_{q^s})|,
\]
where $(v,x)\in\cO_{\rho;\sigma}(\F_q)$. Then \cite[Propositions 5.5 and 5.6]{ah} together imply that equation \cite[(5.5)]{ah} holds in the field $K$, where the left-hand side (which, as stated, involves the polynomials $\Pi_{\mu;\nu}^{\rho;\sigma}(t)$) is replaced by
\[
\sum_{(\tau;\upsilon)\in\cQ_n}\lambda_{(\tau;\upsilon)}\pi_{\mu;\nu}^{\tau;\upsilon}\pi_{\mu';\nu'}^{\tau;\upsilon}.
\]

As in the proof of \cite[Proposition 5.7]{ah}, the uniqueness in \cite[Theorem 5.4]{ah} implies that $\pi_{\mu;\nu}^{\rho;\sigma}$ is an element of $\Q(t)$, and hence of $\Q(t)\cap R=\Z[t,t^{-1}]$. Since $\pi_{\mu;\nu}^{\rho;\sigma}$ is $\Z$-valued, it must lie in $\Z[t]$. Moreover, uniqueness shows that $\pi_{\mu;\nu}^{\rho;\sigma}$ is unchanged under $t\mapsto -t$, so it actually lies in $\Z[t^2]$. Let $\Pi_{\mu;\nu}^{\rho;\sigma}(t)\in\Z[t]$ be the polynomial such that $\Pi_{\mu;\nu}^{\rho;\sigma}(t^2)$ is identified with $\pi_{\mu;\nu}^{\rho;\sigma}$. Then by definition we have
\[
\pi_{\mu;\nu}^{\rho;\sigma}(s)=\Pi_{\mu;\nu}^{\rho;\sigma}(q^s),\text{ for all }s
\in\Z_{>0}.
\]
Uniqueness also implies that $\pi_{\mu;\nu}^{\rho;\sigma}$, and hence $\Pi_{\mu;\nu}^{\rho;\sigma}(t)$, is independent of the prime power $q$ used to define it. This proves the claim. 
\end{proof}

We next prove a purity result, by a standard method. Recall that if $X$ is a projective variety with a Frobenius morphism $F$ relative to the finite field $\F_q$, the cohomology of $X$ is said to be \emph{pure} if the eigenvalues of $F$ on $H^i(X,\Qlb)$ are algebraic numbers all of whose complex conjugates have absolute value $q^{i/2}$.

\begin{prop} \label{prop:purity}
Let $(\rho;\sigma),(\mu;\nu)\in\cQ_n$. Suppose that $\F$ is the algebraic closure of $\F_q$, and let $(v,x)\in\cO_{\rho;\sigma}(\F_q)$. Then the cohomology of $\pi_{\mu;\nu}^{-1}(v,x)$ is pure.
\end{prop}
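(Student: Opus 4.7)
The plan is to show that $R\pi_{\mu;\nu*}\Qlb$ is pure on $\overline{\cO_{\mu;\nu}}$ as a complex of sheaves, and then transfer this to the fibre cohomology via proper base change at $(v,x)$.

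First I would recall from \cite{ah} that the resolution $\pi_{\mu;\nu}\colon\widetilde{\cO_{\mu;\nu}}\to\overline{\cO_{\mu;\nu}}$ is projective and that the source $\widetilde{\cO_{\mu;\nu}}$ is a smooth variety, of some dimension $d$. Consequently the shifted and twisted constant sheaf $\Qlb[d](d/2)$ on $\widetilde{\cO_{\mu;\nu}}$ is Verdier self-dual and pure of weight $0$. By Deligne's theorem on the weights of $R\pi_{\mu;\nu!}$, which coincides with $R\pi_{\mu;\nu*}$ by properness, the pushforward $R\pi_{\mu;\nu*}(\Qlb[d](d/2))$ is mixed of weight $\leq 0$. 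Verdier duality together with the self-duality of $\Qlb[d](d/2)$ on the smooth source shows that the same pushforward is Verdier self-dual on $\overline{\cO_{\mu;\nu}}$, and hence is mixed of weight $\geq 0$ as well. It is therefore pure of weight $0$, which unwinds to say that each ordinary higher direct image sheaf $R^j\pi_{\mu;\nu*}\Qlb$ is pure of weight $j$ on $\overline{\cO_{\mu;\nu}}$.

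Then I would apply proper base change at the closed $\F_q$-rational point $(v,x)$ to identify $H^j(\pi_{\mu;\nu}^{-1}(v,x),\Qlb)$ with the stalk $(R^j\pi_{\mu;\nu*}\Qlb)_{(v,x)}$, compatibly with the Frobenius action. Combined with the purity established in the previous step, this yields the asserted purity of $H^j(\pi_{\mu;\nu}^{-1}(v,x),\Qlb)$.

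This is exactly the standard Deligne--BBD purity argument for the pushforward of the constant sheaf under a proper morphism from a smooth source, so I do not foresee an essential obstacle. The only care required is in the bookkeeping of shifts and Tate twists needed to pass between the constant sheaf, its self-dual normalization $\Qlb[d](d/2)$, and the individual higher direct image sheaves.
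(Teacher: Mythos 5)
There is a genuine gap in your argument, and it is precisely the one the paper takes pains to address. The step from ``$R\pi_{\mu;\nu*}\Qlb$ is pure of weight $0$ as a complex'' to ``each $R^j\pi_{\mu;\nu*}\Qlb$ is pure of weight $j$ on $\overline{\cO_{\mu;\nu}}$'' is not valid. Purity of a complex in the sense of BBD (5.1.8) means the complex has weight $\leq 0$ and its Verdier dual has weight $\leq 0$. The first condition controls the stalks from above: the Frobenius eigenvalues on $\mathcal H^j(R\pi_*\Qlb)_{\bar z}$ have absolute value $\leq q_z^{j/2}$. The second condition controls the \emph{costalks}, via duality, not the stalks. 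Nothing in the definition forces the eigenvalues on the stalks to be exactly $q_z^{j/2}$, i.e.\ the cohomology sheaves $\mathcal H^j(R\pi_*\Qlb)$ need not themselves be (punctually) pure. This is the distinction between purity of a complex and \emph{pointwise} purity, and the two are genuinely different: the general statement that IC-extensions of pure local systems over finite fields are pointwise pure is a nontrivial theorem of Gabber, not a formal consequence of duality. Proper base change does correctly identify $H^j(\pi_{\mu;\nu}^{-1}(v,x),\Qlb)$ with the stalk $(R^j\pi_{\mu;\nu*}\Qlb)_{(v,x)}$, but that only transports the gap; it does not fill it.

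The paper's proof invokes \cite[Remarque 5.4.9]{bbd} for exactly the complex-level purity you cite, and then says explicitly ``we must show that it is pointwise pure.'' The substantive content of the proof is the mechanism for promoting complex purity to pointwise purity: one uses \cite[Proposition 2.3.3]{ms}, which applies when there is a transverse slice $S$ to the orbit $\cO_{\rho;\sigma}$ at $(v,x)$ together with a one-parameter subgroup $\varphi\colon\F^\times\to G\times\F^\times$ contracting $S$ to $(v,x)$. The bulk of the paper's argument is the explicit construction of $S$ (built from the subspace $U$ dual to $E^x$ under the trace form, and the subspace $T$ complementary to $E^x v$) and of the contracting cocharacter $\varphi$ via a normal basis for $(v,x)$. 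To repair your proof you would need to supply such a contracting slice argument (or some substitute, e.g.\ an affine paving of the fibre or an appeal to Gabber's pointwise purity theorem together with the decomposition theorem); as written, the proposal stops exactly where the real work begins.
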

\begin{proof}
Since $\pi_{\mu;\nu}$ is a resolution of singularities of $\overline{\cO_{\mu;\nu}}$, the derived push-forward complex $R(\pi_{\mu;\nu})_*\Qlb$ is pure of weight $0$ by \cite[Remarque 5.4.9]{bbd}; we must show that it is pointwise pure. This will follow from the general principle \cite[Proposition 2.3.3]{ms}, if we can show the existence of a transverse slice $S$ to the orbit $\cO_{\rho;\sigma}$ at $(v,x)$ and a $1$-parameter subgroup $\varphi:\F^\times\to G\times\F^\times$ which contracts $S$ to $(v,x)$. 

Here we have enlarged the action of $G=GL(V)$ on the enhanced nilpotent cone $V\times\cN$ to $G\times\F^\times$, where $\F^\times$ has the obvious scaling action on $V$ and on $\cN$. It is clear from \cite[Section 2]{ah} that the $(G\times\F^\times)$-orbits in $V\times\cN$ are the same as the $G$-orbits.

By \cite[Proposition 2.3]{ah}, there exists a normal basis $\{v_{ij}\}$ of $V$ for $(v,x)$. Let $\lambda=\rho+\sigma$ be the Jordan type of $x$. Recall from \cite[Proposition 2.8]{ah} that $E^x=\{y\in\gl(V)\,|\,[y,x]=0\}$ has basis
\[
\{y_{i_1,i_2,s}\,|\,1\leq i_1,i_2\leq\ell(\lambda),\,
\max\{0,\lambda_{i_1}-\lambda_{i_2}\}\leq s\leq \lambda_{i_1}-1\},
\]
where
\[
y_{i_1,i_2,s}v_{ij}=\begin{cases}
v_{i_2,j-s},&\text{ if $i=i_1$, $s+1\leq j\leq\lambda_i$,}\\
0,&\text{ otherwise.}
\end{cases}
\]
Let $U$ be the subspace of $\gl(V)$ with basis
\[
\{z_{i_1,i_2,s}\,|\,1\leq i_1,i_2\leq\ell(\lambda),\,
\max\{0,\lambda_{i_1}-\lambda_{i_2}\}\leq s\leq \lambda_{i_1}-1\},
\]
where
\[
z_{i_1,i_2,s}v_{ij}=\begin{cases}
v_{i_1,s+1},&\text{ if $i=i_2$, $j=1$,}\\
0,&\text{ otherwise.}
\end{cases}
\]
We clearly have
\[
\tr(y_{i_1,i_2,s}\,z_{i_1',i_2',s'})=\begin{cases}
1,&\text{ if $i_1'=i_1$, $i_2'=i_2$, $s'=s$,}\\
0,&\text{ otherwise.}
\end{cases}
\]
So the trace form restricts to a perfect pairing $E^x\times U\to\F$. In other words, the subspace $U$ is complementary to $[\gl(V),x]$, which is the subspace perpendicular to $E^x$ for the trace form.

Let $T$ be the subspace of $V$ spanned by $\{v_{ij}\,|\,1\leq i \leq\ell(\sigma), \rho_i+1\leq j\leq\rho_i+\sigma_i\}$. By \cite[Proposition 2.8(5)]{ah}, $T$ is complementary to $E^x v$. It follows immediately that $T\oplus U$ is complementary to $\{(yv,[y,x])\,|\,y\in\gl(V)\}$ in $V\oplus\gl(V)$. Hence 
\[ S=(v+T)\times((x+U)\cap\cN) \] 
is a transverse slice in $V\times\cN$ to the orbit $\cO_{\rho;\sigma}$ at $(v,x)$.

Let $\varphi':\F^\times\to G$ be the $1$-parameter subgroup defined by the rule
\[
\varphi'(t)v_{ij}=t^{j-\rho_i-1}v_{ij}.
\]
Define $\varphi:\F^\times\to G\times\F^\times:t\mapsto(\varphi'(t),t)$. Then by definition, $\varphi(\F^\times)$ fixes $v=\sum v_{i,\rho_i}$ and acts with strictly positive weights on $T$. From the fact that $xv_{ij}$ equals either $v_{i,j-1}$ or $0$, it follows that $\varphi(\F^\times)$ fixes $x$. Finally, we have
\[
\varphi(t)z_{i_1,i_2,s}=t^{\rho_{i_2}-\rho_{i_1}+s+1}z_{i_1,i_2,s},
\]
where the exponent is positive by the assumptions on $s$. So $\varphi(\F^\times)$ acts with strictly positive weights on $U$ also. Hence it contracts $S$ to $(v,x)$ as required.
\end{proof}

We can now give the proof of Theorem \ref{thm:substitute}.

\begin{proof}
Suppose first that $\F$ is the algebraic closure of $\F_q$. Define the polynomial $\Pi_{\mu;\nu}^{\rho;\sigma}$ as in Proposition \ref{prop:polynomiality}. By the Grothendieck Trace Formula,
\[
\Pi_{\mu;\nu}^{\rho;\sigma}(q^s)=\sum_i(-1)^i\tr(F^s\,|\,H^i(\pi_{\mu;\nu}^{-1}(v,x),\Qlb))\text{ for all }s\geq 1.
\]
Proposition \ref{prop:purity} ensures that no Frobenius eigenvalue can occur in more than one cohomology group. We can conclude that every Frobenius eigenvalue arising in the right-hand side is an integer power of $q$. By Proposition \ref{prop:purity} again, every eigenvalue of $F$ on $H^i(\pi_{\mu;\nu}^{-1}(v,x),\Qlb)$ must equal $q^{i/2}$, with $H^i(\pi_{\mu;\nu}^{-1}(v,x),\Qlb)$ vanishing if $i$ is odd. This proves that the polynomial $\Pi_{\mu;\nu}^{\rho;\sigma}$ satisfies property (1) of Theorem \ref{thm:substitute}, and hence has nonnegative coefficients.

Finally, the fact that property (1) holds when $\F$ is an algebraic closure of any finite field implies that it must hold in general, by the well-known principles of \cite[Section 6.1]{bbd}.
\end{proof} 
%%%%%%%%%%%%%%%%%%%%%%%%%%%%%%%%%%%%%%%%%%%%%%%%%%%%%%%%%%%%%%%%%%%%%%%%%%%%

%%%%%%%%%%%%%%%%%%%%%%%%%%%%%%%%%%%%%%%%%%%%%%%%%%%%%%%%%%%%%%%%%%%%%%%%%%%%%
\end{document}